\newtheorem{theorem}{Theorem}[section]
\newtheorem{proposition}[theorem]{Proposition}
\newtheorem{lemma}[theorem]{Lemma}
\theoremstyle{definition}
\newtheorem{definition}[theorem]{Definition}
\theoremstyle{remark}
\newtheorem{remark}[theorem]{Remark}
\newtheorem*{remark*}{Remark}
\numberwithin{equation}{section}
\newcommand{\bR}{{\mathbb R}}
\newcommand{\bC}{{\mathbb C}}
\newcommand{\ttT}{\mathtt{T}}
\newcommand{\teven}{\mathrm{even}}
\newcommand{\todd}{\mathrm{odd}}
\newcommand{\tanalytic}{\mathrm{an}}
\newcommand{\cC}{\mathcal{C}}
\newcommand{\cE}{\mathcal{E}}
\newcommand{\cF}{\mathcal{F}}
\newcommand{\cG}{\mathcal{G}}
\newcommand{\cL}{\mathcal{L}}
\newcommand{\cI}{\mathcal{I}}
\newcommand{\cS}{\mathcal{S}}
\def\cT{\mathcal{T}}
\newcommand{\cU}{\mathcal{U}}
\newcommand{\cV}{\mathcal{V}}
\newcommand{\cO}{\mathcal{O}}
\newcommand{\fg}{\mathfrak{g}}
\newcommand{\fk}{\mathfrak{k}}
\newcommand{\fm}{\mathfrak{m}}
\newcommand{\fp}{\mathfrak{p}}
\newcommand{\fs}{\mathfrak{s}}
\newcommand{\fv}{\mathfrak{v}}
\newcommand{\ft}{\mathfrak{t}}
\newcommand{\fh}{\mathfrak{h}}
\newcommand{\fq}{\mathfrak{q}}
\newcommand{\fb}{\mathfrak{b}}
\newcommand{\ff}{\mathfrak{f}}
\def\fa{\mathfrak{a}}
\def\op{\oplus}
\def\a{\alpha}
\def\s{\sigma}
\def\bi{\mathbf{i}}
\def\tand{\quad\hbox{and}\quad}
\def\tAd{\mathrm{Ad}}
\def\tim{\mathrm{im}}
\newcommand{\hsp}[1]{{\hbox{\hspace{#1}}}}
\newenvironment{myeqn}[1]
{ \begin{center}
  $({#1})$ \hfill $\displaystyle
}
{ $ \hfill \hsp{0pt} \end{center} }
\newenvironment{blist}{ 
  \begin{list}{{\small{$\bullet$}}}
   {\usecounter{cnt} \setlength{\itemsep}{2pt}
    \setlength{\leftmargin}{15pt} \setlength{\labelwidth}{20pt} }
   }
   {\end{list}}
\newcounter{acnt}
\newenvironment{a_list}{ 
  \begin{list}{\emph{(\alph{acnt})}}
   {\usecounter{acnt} \setlength{\itemsep}{3pt}
    \setlength{\leftmargin}{25pt} \setlength{\labelwidth}{20pt} }
   }
   {\end{list}}
\begin{document}
\title[Non-classical domains]
{Quotients of non-classical flag domains are not algebraic}

\author[Griffiths]{Phillip Griffiths}
\address{School of Mathematics, Institute for Advanced Study, Einstein Drive, Princeton, NY  08540}
\email{pg@ias.edu}

\author[Robles]{Colleen Robles}
\address{Mathematics Department, Mail-stop 3368, Texas A\&M University, College Station, TX  77843-3368}
\email{robles@math.tamu.edu}

\author[Toledo]{Domingo ~Toledo}
\address{ Mathematics Department,
University of Utah, Salt Lake City, Utah 84112  USA  }
\email{toledo@math.utah.edu }

\thanks{Robles is partially supported by NSF DMS-1006353, Toledo is partially supported by Simons Foundation grant 208853.}

\date{\today}

\begin{abstract}
A flag domain $D = G/V$ for $G$ a simple real non-compact group $G$ with compact Cartan subgroup is non-classical if it does not fiber holomorphically or anti-holomorphically over a Hermitian symmetric space.  We prove that any two points in a non-classical domain $D$ can be joined by a finite chain of compact subvarieties of $D$.  Then we prove that for $\Gamma$ an infinite, finitely generated discrete subgroup of $G$, the analytic space $\Gamma\backslash D$ does not have an algebraic structure.
\end{abstract}

\keywords{Hodge theory, flag domain}
\subjclass[2010]
{
  53C30, 
  58A14. 
}
\maketitle

\section{Introduction}
\label{sec:introduction}

The purpose of this paper is to prove two properties of non-classical flag domains.  The first is a geometric property of the collection of their compact subvarieties:  any two points can be joined by a chain of compact subvarieties.   Then, as an application, we prove that for any infinite,  finitely generated, discrete group $\Gamma$ of automorphisms of  a non-classical domain $D$, the quotient space $\Gamma\backslash D$ is not an algebraic variety.

By \emph{ flag domain} we mean  a homogeneous complex manifold  $D = G/V$ for a non-compact, connected, linear,  real semisimple Lie group $G$ with a compact Cartan subgroup $T$, where the  isotropy group $V$ is the centralizer of a subtorus of $T$. Note that our terminology includes a wider class than the manifolds $G/T$. 

 It will be convenient to assume that $G$ is \emph{simple}.  The formulation and proofs of the corresponding results in the general semi-simple case  easily reduce to the simple case.  It will also be convenient to assume that $G$ is an \emph{adjoint} group, so it is also simple as an abstract group, and therefore acts effectively on its various homogeneous spaces.

 There is a unique maximal compact subgroup $K$ of $G$ containing $V$ and we have inclusions $T\subset V\subset K\subset G$.   We have a fibration $p: G/V\to G/K$  of $D$ to the symmetric space $G/K$ for $G$ with fiber $K/V$.  We say that the domain $D = G/V$ is \emph{classical} if $G/K$ is Hermitian symmetric \emph{and}  $p$ is holomorphic or anti-holomorphic.  Otherwise $D$ is said to be \emph{non-classical}.

The fibers of $p$ are complex subvarieties of $D$, biholomorphic to the homogeneous projective variety $K/V$.  The dichotomy between classical and non-classical can be formulated in many equivalent ways.   The characterization most relevant for us is: $D$ is non-classical if and only if it has  compact subvarieties that are not contained in the fibers of $p$.  In this case there is a family $Z_u\subset D$ of complex subvarieties, which are deformations of the fibers of $p$.  This family is parametrized by a Stein manifold $\cU$,  which contains the symmetric space $G/K$ as a totally real submanifold.  See, for example, \cite{FHW, GCBMS, WW} for more details on $\cU$. 

We prove the following theorem:

\begin{theorem}
\label{thm:Zconnectedness}
Let $D$ be a non-classical domain, with $G$ simple, and let $x,y\in D$.   Then there exists a finite sequence $u_1,\dots,u_k\in \cU$ so that $x\in Z_{u_1}, y\in Z_{u_k}$ and $Z_{u_i}\cap Z_{u_{i+1}}\ne\emptyset$ for $i =1,\dots, k-1$.
\end{theorem}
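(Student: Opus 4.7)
My plan is to establish that the equivalence relation $\sim$ on $D$ generated by the basic relation $x \approx y \Leftrightarrow \exists u \in \cU$ with $\{x,y\} \subset Z_u$ has a single equivalence class, which is equivalent to the theorem's statement. Since the family $\{Z_u\}_{u\in\cU}$ is $G$-equivariant (i.e.\ $g\cdot Z_u = Z_{g\cdot u}$), the relation $\sim$ is $G$-invariant. The strategy is to show each equivalence class $[x]_\sim$ is open in $D$, so that connectedness of $D$ forces a unique class; by $G$-homogeneity of $D$ it suffices to verify openness at a basepoint $x_0 = eV$.

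To establish openness, I would argue that $[x_0]_\sim \supset \bigcup_{u \in W_{x_0}} Z_u$, where $W_{x_0} = \{u \in \cU : x_0 \in Z_u\}$ is the analytic subvariety of $\cU$ parametrizing cycles through $x_0$, and then show that this union is an open neighborhood of $x_0$ in $D$. Introduce the universal incidence variety $\cI = \{(u,y) \in \cU \times D : y \in Z_u\}$ with projections $\pi : \cI \to \cU$ and $\rho : \cI \to D$; the union equals $\rho(\pi^{-1}(W_{x_0}))$. A dimension count ($W_{x_0}$ has complex codimension in $\cU$ equal to the codimension $(\dim G - \dim K)/2$ of $Z_{u_0}$ in $D$, while $\dim_\bC Z_{u_0} = (\dim K - \dim V)/2$) yields $\dim_\bC \pi^{-1}(W_{x_0}) = \dim_\bC D$. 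The differential of $\rho|_{\pi^{-1}(W_{x_0})}$ at a generic $(u,y)$ is an isomorphism, because elements of $T_u W_{x_0}$ correspond to sections of the normal bundle $N_{Z_u/D}$ vanishing at $x_0$, and such sections span a complement to $T_y Z_u$ under evaluation at a generic $y \ne x_0$. Thus the union contains a dense open subset of its closure in $D$.

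The delicate point is to show that $x_0$ itself lies in the interior of this union, equivalently that $W_{x_0} \cap W_y \ne \emptyset$ for every $y$ sufficiently close to $x_0$ in $D$. Both $W_{x_0}$ and $W_y$ are complex analytic subvarieties of $\cU$ of complex codimension $\tfrac{1}{2}\dim_\bC \cU$, so of complementary codimension, and their expected intersection has complex dimension $0$. At $y = x_0$ the intersection is all of $W_{x_0}$, so the question is whether this ``self-intersection'' persists under small perturbations of $y$. I would attack this by a local intersection-theoretic / transversality argument inside the Stein manifold $\cU$, using $G$-equivariance of the family $\{W_y\}_{y\in D}$ and, crucially, the non-classical hypothesis, which guarantees that $\cU$ is strictly larger than $G/K$ and hence that $W_{x_0}$ is positive-dimensional with genuine ``imaginary'' deformations unavailable in the classical setting.

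The main obstacle is precisely this persistence-of-intersection step. In the classical case $\cU = G/K$, so $W_{x_0} = \{p(x_0)\}$ is a single point, the $Z_u$ are pairwise disjoint fibers of $p$, and no chain of intersecting cycles can connect different fibers — the theorem genuinely fails there, so the non-classical hypothesis must enter essentially at this step. Once openness of $[x_0]_\sim$ is established, connectedness of $D$ gives that $\sim$ has a single equivalence class, completing the proof.
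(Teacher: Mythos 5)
Your high-level reduction — the equivalence relation generated by cycle-incidence, openness of classes, connectedness of $D$ — is a sound reformulation of the statement, and your dimension count for $W_{x_0}$ and $\pi^{-1}(W_{x_0})$ is correct. However, there is a genuine gap, and it is exactly at the step you flag as ``delicate.'' You try to show the \emph{one-step} reachable set $\bigcup_{u\in W_{x_0}} Z_u = \rho(\pi^{-1}(W_{x_0}))$ is already an open neighborhood of $x_0$. That is not what the paper proves, and it is not clear it is even true: your dimension count and the genericity claim about sections of $N_{Z_u/D}$ vanishing at $x_0$ give at best that $\rho$ is generically submersive on $\pi^{-1}(W_{x_0})$, so the image contains an open set somewhere — but $x_0$ itself can sit on the boundary of a ``cone.'' You explicitly acknowledge this and propose a persistence-of-intersection argument for $W_{x_0}\cap W_y$, but you never supply it, and you never say concretely where the non-classical hypothesis would enter.

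The paper sidesteps the one-step question entirely. It works on the incidence variety $\cI$ with the sub-bundles $E$ (tangent to fibers of $\cI\to\cU$) and $F$ (tangent to fibers of $\cI\to D$), and proves that the distribution $S=E\oplus F$ is \emph{bracket-generating} (Lemma \ref{lem:bracketgeneration}). This is a purely Lie-theoretic computation using the $\ttT$-grading of $\fg_\bC$; the non-classical hypothesis appears there in a completely explicit way, by showing that the putative obstruction subspace $\ff$ would be an ideal of $\fg_\bC$, hence (by simplicity) would force $G/K$ to be Hermitian symmetric with $p$ holomorphic. The Stefan--Sussmann Orbit Theorem then says the pseudogroup generated by flows along $\cE\cup\cF$ has the single orbit $\cI$, which is the multi-step connectivity you need — and it handles the case where the cycles through $x_0$ only fill out an open set after iterated brackets, i.e.\ after several alternations between moving in a cycle and moving the cycle. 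To turn your proposal into a proof you would have to either supply the intersection-persistence argument (which looks at least as hard as, and morally equivalent to, first-order bracket surjectivity at $x_0$, and may require higher-order information), or replace it with the bracket-generating/Orbit-Theorem machinery.
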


Observe that this theorem immediately implies  the known fact  (see Theorem 5.7 of \cite{Wolf} or Theorem 4.4.3 of \cite{FHW})
that $D$ has no non-constant holomorphic functions.   Therefore, if $D = G/V$ does not fiber holomorphically or antiholomorphically over $G/K$, then it admits no non-constant holomorphic map to \emph{any} Hermitian symmetric domain.

 As an application of this theorem, we prove:

\begin{theorem}
\label{thm:nonalgebraic}
Let $D$ be a non-classical domain, with $G$ simple and adjoint, and let $\Gamma\subset G$ be an infinite, finitely generated  discrete subgroup.  Then the  normal complex analytic space  $X = \Gamma\backslash D$ does not have an algebraic structure. 
\end{theorem}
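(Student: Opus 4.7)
The strategy is to assume that $X = \Gamma\backslash D$ carries an algebraic structure and derive a contradiction from Theorem~\ref{thm:Zconnectedness}. Write $\pi\colon D \to X$ for the quotient map, and $\bar Z_u := \pi(Z_u)$ for the compact analytic (and, under the assumption, algebraic) subvarieties of $X$. Since $X$ has positive dimension and is algebraic, its function field has positive transcendence degree, so $X$ carries non-constant rational functions, which, via GAGA, correspond to non-constant meromorphic functions on $X$. It therefore suffices to show that every meromorphic function $f$ on $X$ is constant. Let $\tilde f := f\circ\pi$ be its $\Gamma$-invariant meromorphic pullback to $D$. If one can establish that $\tilde f|_{Z_u}$ is constant, with some value $c_u$, for every $u \in \cU$, then Theorem~\ref{thm:Zconnectedness} chains intersecting $Z_{u_i}$'s between any two points of $D$, forcing $c_{u_i} = c_{u_{i+1}}$ and hence making $\tilde f$, and so $f$, constant.

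The heart of the proof is thus the claim that $\tilde f$ is constant on each $Z_u$. When $f$ is holomorphic, this is immediate from the compactness of $Z_u$, and this alone yields the familiar fact that $X$ has no non-constant holomorphic functions; but for general meromorphic $f$ the claim is genuinely delicate, since a compact $Z_u \cong K/V$ supports many non-constant meromorphic functions. Here I would exploit the algebraicity of $X$ as follows. The assignment $u\mapsto (\bar Z_u,\, f|_{\bar Z_u})$ is a holomorphic family over the Stein manifold $\cU$ taking values in a finite-dimensional algebraic parameter space $H$ classifying pairs (closed subvariety of $X$, rational function on it) — built from the Chow variety of $X$ together with a relative Picard/Hilbert scheme of rational functions on the universal subvariety. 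Because $f$ descends from $D$ to $X$, the induced holomorphic map $\Phi\colon \cU \to H$ is invariant under the natural action of $\Gamma$ on $\cU$. Combining the Stein nature of $\cU$, the finite dimensionality and algebraicity of $H$, and the infiniteness of $\Gamma$ (which produces many identifications among $\Gamma$-orbits in $\cU$ collapsed by $\Phi$), one should conclude that $\Phi$ is so restricted that each $\tilde f|_{Z_u}$ must be constant.

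The main obstacle is precisely this rigidity step: translating the $\Gamma$-invariance of the holomorphic map $\Phi\colon\cU \to H$ from a Stein source to a finite-dimensional algebraic target into genuine constancy of every $\tilde f|_{Z_u}$. Both hypotheses of Theorem~\ref{thm:nonalgebraic} appear essential at this point — algebraicity of $X$ provides the finite-dimensional target $H$, while infiniteness of $\Gamma$ forces enough coincidences in $H$ to collapse the family of restrictions $\tilde f|_{Z_u}$ to a single constant value per fiber.
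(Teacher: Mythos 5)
There is a genuine gap in your approach, and it sits exactly where you flag it. The claim that $\tilde f|_{Z_u}$ is constant for every meromorphic $\tilde f = f\circ\pi$ pulled back from $X$ is the entire content of the argument, and you give no actual proof of it. Compactness of $Z_u$ gives constancy only for \emph{holomorphic} functions; since $Z_u \cong K/V$ is a rational projective variety, it carries a huge supply of non-constant meromorphic functions, and nothing prevents a meromorphic function on $X$ from restricting non-trivially. Your proposed remedy --- packaging $u\mapsto(\bar Z_u, f|_{\bar Z_u})$ as a map $\Phi$ from the Stein manifold $\cU$ to a Chow/Hilbert/Picard-type parameter space $H$ and then invoking $\Gamma$-invariance plus finite-dimensionality of $H$ --- is a sketch of a rigidity statement, not a proof of one, and I see no way to extract from it the pointwise constancy of $\tilde f$ on each $Z_u$. (It also silently uses that $X$ is a complex manifold: you would first need the reduction to torsion-free $\Gamma$, which requires the codimension-two fixed-point argument of Lemma~\ref{lem:codimension} and the Riemann Existence Theorem.)

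The paper takes a fundamentally different and much more robust route. Rather than using compactness of the $Z_u$ to kill meromorphic functions, it uses the fact that the $Z_u$ are \emph{simply connected}. The machine is Koll\'ar's Shafarevich map: if $X$ were algebraic (after reducing to $\Gamma$ torsion-free so that $X$ is a manifold and $\pi_1(X)\cong\Gamma$), there would be a rational map $sh_X\colon X\dashrightarrow Sh(X)$ whose very general fibers absorb the image of any normal cycle with finite $\pi_1$-image. Each $\pi|_{Z_u}\colon Z_u\to X$ is such a normal cycle (trivial $\pi_1$), so Theorem~\ref{thm:Zconnectedness} forces any two very general fibers to coincide, hence $sh_X$ is constant, hence by the defining property of the Shafarevich map $\pi_1(X)$ is finite --- contradicting that $\Gamma$ is infinite. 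Your argument tries to produce a contradiction from function theory; the paper produces one from the topology of the fundamental group, which is exactly where the simple connectivity of $K/V$ and Koll\'ar's theorem make the argument close. As written, your proof has a hole at its central step and would need a genuinely new idea to fill it.
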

 We remark that the most important case of this theorem is when $\Gamma$ has no torsion, in which case $X$ is a complex manifold with $\pi_1(X) = \Gamma$.   If $\Gamma$ has torsion but is finitely generated, then it always has a torsion-free subgroup $\Gamma'$ of finite index, and $X' = \Gamma'\backslash D$ is a complex manifold with a finite map $X'\to X$.   Another important case is when $\Gamma$ is a lattice in $G$.
  
 We say that a complex manifold (or an analytic space) $X$ has an algebraic structure if $X$ is biholomorphic to the analytic space $W^{an}$  associated to an abstract algebraic variety $W$ over $\bC$ (or a scheme $W$ of finite type over $\bC$).  For example, a quasiprojective variety has an algebraic structure in this sense.

The motivation for Theorem \ref{thm:nonalgebraic} comes from the study of variations of Hodge structure and period mappings for families of algebraic varieties $\cV\to S$ parametrized by an algebraic variety $S$.   In this situation there is a  flag domain $D$, the period domain, determined by the Hodge structure of the fibers of $\cV\to S$, and a holomorphic map $S\to X = \Gamma\backslash D$, the period map, where $\Gamma\subset G$ is the monodromy group  (which can be assumed to be torsion-free by passing to a finite cover of $S$).  If the Hodge structure has weight one, then $D$ is a Hermitian symmetric domain, but if it has higher weight, then $D$ is typically a non-classical domain.  

One of the ways it was realized that Hodge structures of higher weight are fundamentally different from those of weight one is that there are no automorphic forms on their period domains, meaning that  $H^0(X,\omega^{\otimes k}_X) = 0$ for all $k>0$.  This led to the general suspicion, never proved, that $X$ is not an algebraic variety.  Theorem \ref{thm:nonalgebraic} confirms that this is indeed the case.

It is worth noting that $X$ contains algebraic subvarieties.  Besides the $Z_u$, it contains, e.g, images of period mappings $S\to X$, and quotients of equivariantly embedded Hermitian symmetric domains $D'\subset D$.  The geometry of such non-algebraic varieties containing many positive-dimensional algebraic subvarieties is a subject of current interest.

Note that Theorem \ref{thm:nonalgebraic} is known in the case that $\Gamma$ is  co-compact  \cite{CT}.     The point of this paper is to treat the non-co-compact case, which seems to be difficult to attack by the methods of \cite{CT}.  It is also the significant case for Hodge theory, since the images of monodromy representations  are typically not co-compact.

Since the subvarieties $Z_u$ are rational,  rational connectedness of $\Gamma\backslash D$ can be derived from Theorem  \ref{thm:Zconnectedness}.   Compact  rationally connected complex manifolds, satisfying suitable conditions (say class $\cC$) to guarantee compactness of cycle spaces, have finite fundamental group, see \S 2  of \cite{Campana}.   It is fortunate for us that the finiteness of the fundamental group remains true even for non-compact rationally connected algebraic varieties, thus proving Theorem \ref{thm:nonalgebraic}.   It was pointed out to us by Koll\'ar that this is an easy consequence of the properties of the Shafarevich map \cite{Kollar}.   He also pointed out that it holds for rationally connected  Zariski open sets in complex spaces of class $\cC$ (bimeromorphic to a  K\"ahler manifold), therefore the conclusion of  Theorem \ref{thm:nonalgebraic}  can be strengthened to say that  $\Gamma\backslash D$ is not a Zariski open set in a complex space of class $\cC$.

We thank J\'anos Koll\'ar for pointing us to Theorem 3.6  of \cite{Kollar} and for several very helpful conversations, comments and concrete suggestions.    We also thank Igor Zelenko  for pointing us to \cite{Jurdjevic}.

\section{Flag Domains}
\label{sec:flagdomains}

\subsection{Lie theory preliminaries} \label{S:LT}

In this section we set notation and review some Lie theory associated to the flag domain $G/V$.  Let $\ft \subset \fv \subset \fk \subset \fg$ denote the Lie algebras of $T \subset V \subset K \subset G$.  Let $\fg = \fk \oplus \fq$ be the Cartan decomposition.  In particular, the Killing form $B$ is negative definite on $\fk$ and positive definite on $\fq$, and 
$$
  [\fk,\fq] \,\subset\, \fq \quad\hbox{and}\quad 
  [\fq,\fq]\,\subset\,\fk \,.
$$
Given a subspace $\fa \subset \fg$, let $\fa_\bC$ denote the complexification.  Then $\fh = \ft_\bC$ is a Cartan subalgebra of $\fg_\bC$.  Let $\Delta = \Delta(\fg_\bC,\fh)$ denote the roots of $\fg_\bC$.  Given a root $\a\in\Delta$, let $\fg^\a \subset \fg_\bC$ denote the associated root space.  Given a subspace $\fs \subset \fg_\bC$, define 
$$
  \Delta(\fs) \ = \ \{ \a\in\Delta \ | \ \fg^\a \subset \fs \} \,.
$$
If $[\fh,\fs] \subset \fs$, then 
$$
  \fs \ = \ \left( \fs \cap \fh \right) \ \op \ 
  \bigoplus_{\a\in\Delta(\fs)} \fg^\a \,.
$$
In particular, 
\begin{eqnarray}
  \nonumber
  \fv_\bC & = & \fh \ \op \ \bigoplus_{\a\in\Delta(\fv_\bC)} \fg^\a \,,\\
  \label{E:vkq}
  \fk_\bC & = & \fh \ \op \ \bigoplus_{\a\in\Delta(\fk_\bC)} \fg^\a \,,\\
  \nonumber
  \fq_\bC & = & \bigoplus_{\a\in\Delta(\fq_\bC)} \fg^\a \,.
\end{eqnarray}
Fix a Borel subalgebra $\fh \subset \fb \subset \fg_\bC$.  As above,
$$
  \fb \ = \ \fh \ \op \ \bigoplus_{\a\in\Delta(\fb)} \fg^\a \,.
$$
Moreover, 
$$
  \Delta^+ \ = \ \Delta(\fb)
$$
determines a set of positive roots.  Let $\{\s_1,\ldots,\s_r\}\subset\Delta^+$ denote the corresponding simple roots.  Let $\{\ttT^1,\ldots,\ttT^r\}$ denote the basis of $\fh$ dual to the simple roots, 
\begin{equation} \label{E:Ti}
  \s_i(\ttT^j) \ = \ \delta^j_i \,.
\end{equation}
Let $\Delta(\fv_\bC)^\perp = \Delta \backslash \Delta(\fv)$.  Define 
\begin{equation} \label{E:T}
  \ttT \ = \ \sum_{\s_i \in \Delta(\fq_\bC)} \ttT^i 
  \ + \ \sum_{\s_i \in \Delta(\fk_\bC)\backslash\Delta(\fv_\bC)} 2\,\ttT^i \,.
\end{equation}
Since $\ttT \in \fh$ is semisimple, the Lie algebra $\fg_\bC$ decomposes as a direct sum of $\ttT$--eigenspaces
\begin{equation} \label{E:espace}
  \fg_\bC \ = \ \fg_k \,\op\, \fg_{k-1} \,\op\cdots\op\,\fg_{1-k} \,\op\,\fg_{-k} \,,
\end{equation}
where
$$
  \fg_\ell \ = \ \{ \xi \in \fg_\bC \ | \ [\ttT,\xi] = \ell\xi \} \,.
$$
Note that, as the roots are integral linear combinations of the simple roots, \eqref{E:Ti} and \eqref{E:T} imply that the eigenvalues $\ell$ are integers.   

\begin{remark*} 
Since the roots are pure imaginary on the compact Cartan subalgebra $\ft$, it follows that $\ttT \in \bi\ft$, where $\bi = \sqrt{-1}$.  Therefore,
\begin{equation} \label{E:conj}
  \overline{\fg_k} \ = \ \fg_{-k} \,.
\end{equation}
It follows from \eqref{E:espace} and \eqref{E:conj} that $\fg^{k,-k} = \fg_k$ defines a real, weight zero Hodge structure on $\fg$.  Moreover, the fact that $\fg = \fk \op \fq$ is a Cartan decomposition implies that this Hodge structure is polarized by the Killing form $B$.
\end{remark*}

 By the Jacobi identity,
\begin{equation} \label{E:gr}
  [\fg_\ell,\fg_m] \ \subset \ \fg_{\ell+m} \,.
\end{equation}
We call $\ttT$ a \emph{grading element}, and the eigenspace decomposition \eqref{E:espace} the \emph{$\ttT$--graded decomposition of $\fg_\bC$}.  Observe that 
\begin{eqnarray*}
  \fh \,\subset\, \fv_\bC & = & \fg_0 \,,\\
  \fk_\bC & = & \fg_\teven \ = \ \oplus \,\fg_{2\ell} \,,\\
  \fq_\bC & = & \fg_\todd \ = \ \oplus \,\fg_{2\ell+1} \,.
\end{eqnarray*}
It will be convenient to write 
$$
  \fg_+ \ = \ \bigoplus_{\ell>0} \fg_\ell \quad \hbox{and} \quad
  \fg_- \ = \ \bigoplus_{\ell>0} \fg_{-\ell}\,.
$$
Likewise, $\fk_\bC = \fk_+ \op \fk_0 \op \fk_-$ and $\fq_\bC = \fq_+ \op \fq_-$, with $\fk_0 = \fg_0$, 
$$
  \fk_\pm \ = \ \fk_\bC \,\cap\, \fg_\pm \quad\hbox{and}\quad
  \fq_\pm \ = \fq_\bC \,\cap\, \fg_\pm \,.
$$

Let 
$$
  \fm_\ell \ = \ \fg \,\cap\, \left( \fg_\ell \op \fg_{-\ell} \right) \,.
$$
Then 
$$
  \fg \ = \ \fv \ \op \ \fm_1 \,\op\cdots\op\,\fm_k \,.
$$
The real tangent bundle $TD$ is the homogeneous vector bundle
$$
  TD \ = \ G \times_V (\fg/\fv) \ = \ G \times_V \fm_+ \,,
$$
where $\fm_+ = \fm_1 \op\cdots\op\fm_k$.  A homogeneous complex structure on $D$ is given by specifying $T_\bC D = T_{1,0}D \op T_{0,1} D$ with 
\begin{equation} \label{E:cpxstr}
  T_{1,0} D \ = \ G \times_V \fg_- \quad\hbox{and}\quad
  T_{0,1} D \ = \ G \times_V \fg_+ \,.
\end{equation}

\subsection{The compact dual}

Equation \eqref{E:gr} implies that 
\begin{equation} \label{E:p}
  \fp \ = \ \fg_{\ge0} \ = \ \fv_\bC \,+\, \fb 
\end{equation}
is a Lie subalgebra of $\fg_\bC$.  Moreover, $\fb \subset \fp$ implies that $\fp$ is a parabolic Lie algebra.  Let $G_\bC$ be the complexification of $G$, and let $P \subset G_\bC$ be the parabolic subgroup with Lie algebra $\fp$.  Then the flag domain $D = G/V$ may be identified with the open $G$--orbit of $P/P$ in the \emph{compact dual} $\check D = G_\bC/P$.  

A $G_\bC$--homogeneous complex structure on $\check D$ is given by
\begin{equation} \label{E:checkCstr}
  T_{1,0} \check D \ = \ G_\bC \times_P (\fg/\fp) \quad\hbox{and}\quad
  T_{0,1} \check D \ = \ G_\bC \times_P (\overline{\fg/\fp}) \,.
\end{equation}
From \eqref{E:gr} and \eqref{E:p}, it is immediate that $\fg/\fp \simeq \fg_+$ is a $\fp$--module identification; thus, $T_{0,1} \check D = G_\bC \times_P \fg_+$.  Likewise, the vector space identification $\fg/\fp \simeq \fg_-$, and the fact that $\fg/\fp$ is a $\fp$--module,  allows us to view $\fg_-$ as a $\fp$--module, and write $T_{1,0} = G_\bC \times_P \fg_-$.  In particular, the restriction of the $G_\bC$--homogeneous complex structure \eqref{E:checkCstr} on $\check D$ to $D$ agrees with the $G_\bR$--homogeneous complex structure \eqref{E:cpxstr} on $D$.

Let
$$
  \cT D \ = \ T_{1,0} D \quad\hbox{and}\quad
  \cT \check D \ = \ T_{1,0} \check D 
$$
denote the holomorphic tangent bundles.

\subsection{Compact subvarieties}

Consider the natural fibration $p:G/V\to G/K$.   The fibre
$$
  Z \ = \ p^{-1}(K/K) \ = \ K/V \,.
$$
is a compact, complex submanifold of $D$.  If $K_\bC$ is the complexification of $K$, then 
$$
  Z \ = \ K_\bC / \left( K_\bC \,\cap\, P \right) \,.
$$

\begin{lemma}
Suppose $G$ is simple, and let $U \subset G_\bC$ be a neighborhood of the identity with the property that $g Z \subset D$ for all $g \in G_\bC$.  Then $D$ is classical if and only if for every $g\in U$, $gZ$ is a fiber of $p$.    
\end{lemma}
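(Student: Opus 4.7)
\emph{Forward direction.} Assume $D$ is classical. Then $G/K$ is a Hermitian symmetric domain, hence a bounded symmetric domain and therefore Stein, with no compact positive-dimensional complex analytic subvarieties; and (after possibly replacing the complex structure on $D$ by its conjugate) the projection $p$ is holomorphic. For any $g\in U$, the subset $gZ\subset D$ is a compact, connected, irreducible complex analytic subvariety of $D$, being the biholomorphic image of $Z$ under $g\colon \check D\to\check D$. Its image $p(gZ)$ is then a compact connected analytic subset of the Stein manifold $G/K$, and so reduces to a single point $\{y\}$; hence $gZ\subset p^{-1}(y)$, and matching (equal) dimensions together with irreducibility of both sides yields $gZ=p^{-1}(y)$, a fiber of $p$.

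\emph{Backward direction (contrapositive).} Assume $D$ is non-classical. Let $N=\mathrm{Stab}_{G_\bC}(Z)\supset K_\bC$. Since the fibers of $p$ are precisely the $G$-translates of $Z$, the condition ``$gZ$ is a fiber of $p$'' is equivalent to $g\in GN$, so it suffices to show that $GN$ does not contain any neighborhood of the identity in $G_\bC$: then any $g\in U$ outside $GN$ (and $gZ\subset D$ is automatic from $U$) gives the required counterexample.  The induced vector field on $\check D$ is tangent to $Z$ at the point $kP$ iff $\Ad(k^{-1})\xi\in\fk_\bC+\fp$; intersecting over $k\in K_\bC$ and using $\fv_\bC,\fk_+\subset\fk_\bC$, one identifies $\fn:=\mathrm{Lie}(N)=\fk_\bC+\fq_+^{\circ}$, where $\fq_+^{\circ}\subset\fq_+$ is the largest $\Ad K_\bC$--submodule. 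The reality constraint $\overline{\fg_\ell}=\fg_{-\ell}$ noted in the Remark following \eqref{E:espace} forces $\fg\cap\fn=\fk$, and a routine dimension count (using $\dim_\bR\fg=\dim_\bC\fg_\bC$ and $\dim_\bR\fn=2\dim_\bC\fn$) then reduces the openness condition $\fg+\fn=\fg_\bC$ to the equality $\fq_+^{\circ}=\fq_+$.

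The final step is to identify ``$\fq_+$ is an $\Ad K_\bC$--submodule of $\fq_\bC$'' with the classical hypothesis. A bracket analysis of the $\ttT$--grading \eqref{E:espace} shows that $[\fk_\bC,\fq_+]\subset\fq_+$ is equivalent to the almost complex structure \eqref{E:cpxstr} on $D$ descending $K$-equivariantly to $G/K$, i.e., to $p\colon D\to G/K$ being holomorphic; the antiholomorphic variant is handled in parallel by replacing $\fp$ with the opposite-Borel analogue.  Consequently, in the non-classical case $\fg+\fn\subsetneq\fg_\bC$, so the multiplication map $G\times N\to G_\bC$ has rank strictly less than $\dim_\bR G_\bC$ near the identity, and $GN$ is a local submanifold of positive real codimension; in particular $U\setminus GN\ne\emptyset$, finishing the argument.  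I expect the main obstacle to be the Lie-algebra identification of $\mathrm{Lie}(N)$ and the translation of ``$\fq_+$ is a $K_\bC$--submodule'' into the holomorphicity of $p$; the remainder is dimension counting together with the Stein property of $G/K$.
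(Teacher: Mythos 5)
The paper proves this lemma simply by citing Proposition 2.3.5 of Wells--Wolf \cite{WW}, which determines the largest subgroup of $G$ preserving $Z$. Your proof is a genuinely different, self-contained route, and the overall strategy is sound. A few remarks on comparison and on the places where your sketch needs to be tightened.

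Your forward direction (classical $\Rightarrow$ $gZ$ is always a fiber) via the Stein property of the bounded symmetric domain $G/K$ is clean and correct. Your backward direction replaces the citation by a direct Lie-algebra computation of $\fn = \mathrm{Lie}(\mathrm{Stab}_{G_\bC}(Z))$. The key identification $\fn = \fk_\bC + \fq_+^\circ$, where $\fq_+^\circ = \bigcap_{k\in K_\bC}\Ad(k)\fq_+$, follows from $\fn = \bigcap_{k\in K_\bC}\Ad(k)(\fk_\bC + \fp)$ and $\fk_\bC + \fp = \fk_\bC + \fq_+$, and your argument for it is correct. The computation $\fg\cap\fn = \fk$ from $\overline{\fq_+} = \fq_-$ is right, and the dimension count reducing $\fg+\fn = \fg_\bC$ to $\fq_+^\circ = \fq_+$ checks out: one gets $\dim_\bR(\fg+\fn) = \dim_\bC\fg_\bC + \dim_\bC\fk_\bC + 2\dim_\bC\fq_+^\circ$, which equals $2\dim_\bC\fg_\bC$ precisely when $\dim\fq_+^\circ = \dim\fq_+$.

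Two points deserve more care than you give them. First, the step ``$GN$ is a local submanifold of positive real codimension'' is best justified by observing that $GN$ is the preimage in $G_\bC$ of the $G$-orbit of $eN$ in $G_\bC/N$, hence an immersed submanifold of dimension $\dim_\bR(\fg+\fn)$; arguing directly with ranks of the multiplication map $G\times N\to G_\bC$ is delicate because the rank is $\dim(\Ad(n^{-1})\fg + \fn)$ and varies with $n$. Second, the final identification is not quite that $[\fk_\bC,\fq_+]\subset\fq_+$ is equivalent to $p$ being holomorphic; rather, $\Ad K_\bC$-invariance of $\fq_+$ (equivalently $\fq_-$, by conjugation) is equivalent to the existence of a $G$-invariant complex structure on $G/K$ --- integrability being automatic since $[\fq_+,\fq_+]\subset\fk_\bC$ --- for which $p$ is holomorphic; the conjugate structure gives the anti-holomorphic case. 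There is no need for an ``opposite-Borel analogue'': the condition $\fq_+^\circ=\fq_+$ is already symmetric under conjugation, so it captures ``holomorphic or anti-holomorphic'' at once. With these adjustments your argument is a valid, more elementary alternative to the citation in the paper; what it costs is the length of a from-scratch Lie-algebra computation, what it buys is self-containedness.
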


\begin{proof}
This is contained in Proposition 2.3.5 of \cite{WW}, where the largest  subgroup $L\subset G$ preserving $Z$  is determined.
\end{proof}

Suppose $D$ is non-classical, and let $U$ be the maximal  connected neighborhood of the identity in $G_\bC$ with $gZ\subset D$ for all $g\in U$.  Observe that $U$ is invariant under the right action of $K_\bC$.   

\begin{definition}
Assume $D$ be non-classical.  Define
$$
  \cU \ = \ U/K_\bC \subset G_\bC/K_\bC \,;
$$
this is the parameter space for the deformations $gZ \subset D$ of $Z$.  Given $u = g K_\bC \in \cU$, let
$$
  Z_u \ = \ gZ \ \subset \ D
$$
denote the corresponding variety.  The \emph{incidence variety} is
$$
   \cI \ = \ \left\{ (x,u) \in D \times \cU \ | \ x \in Z_u \right\} \,.
$$
\end{definition}
The natural projections $\cI\to D$ and $\cI\to \cU$ yield a diagram:
\begin{equation}
\label{eq:incidencediagram}
 \begin{array}{ccccc} &  & \cI &  &  \\ &  \swarrow &  & \searrow  &  \\D &  &  &  & \cU\end{array}
\end{equation} 

The proof of Theorem \ref{thm:Zconnectedness} utilizes three sub-bundles of the the holomorphic tangent bundle $\cT\cI$. First, observe that the holomorphic tangent bundle $\cT\cI\subset \cT(D\times \cU)$ is the following sub-bundle
\begin{equation}
\label{eq:tangentincidence}
  \cT_{(x,u)}\cI \ = \ \left\{(\dot{x},\dot{u})\in \cT_xD\oplus \cT_u\cU \ | \  
  \dot{u}(x) \equiv \dot{x} \ \mathrm{mod} \ T_xZ_u \right\}\,.
\end{equation}
Above, the tangent vector $\dot{u}\in \cT_u \cU$ is viewed as a holomorphic section of the normal bundle $N_{Z_u/D} = \cT D/\cT Z_u$; thus, the the normal field $\dot{u}$ can be evaluated at any $x\in Z_u$ to give a vector in $\cT_x D/\cT_x Z_u$.  Geometrically, $\cT_{(x,u)}\cI$ is the set of tangent vectors $(\dot{x}(0),\dot{u}(0))$ given by curves $(x(t),u(t)) \in D \times \cU$ with $x(t)\in Z_{u(t)}$ and $(x(0),u(0)) = (x,u)$.

\begin{definition}
\label{def:differentialsystems}
Define  sub-bundles $S,E$ and $F$  of $T\cI$ by:
\begin{eqnarray*}
  S_{(x,u)}  & = & 
  \left\{ (\dot{x},\dot{u})\in T_{(x,u)}\cI \ | \ 
          \dot{x}\in T_x Z_u \right\}  \\ 
  & = & \left\{ (\dot{x},\dot{u})\in T_{(x,u)}\cI \ | \ 
                \dot{u}(x) = 0 \right\}  \,,\\
  E_{(x,u)} & = & \left \{(\dot{x},0)\in T_{(x,u)}\cI \right\} 
  \ = \ \left \{(\dot{x},0)\in T_xD\oplus T_u\cU \ | \ 
                \dot{x}\in T_x Z_u \right\} \,,\\
  F_{(x,u)} & = &  \left\{(0,\dot{u})\in T_{(x,u)} \cI \right\} 
  \ = \ \left\{ (0,\dot{u})\in T_x D\oplus T_u \cU \ | \ 
                \dot{u}(x) = 0 \right\} \,.
\end{eqnarray*}
\end{definition}

\noindent Note that the asserted equalities in the definitions follow from the defining equation (\ref{eq:tangentincidence}) of $T\cI$.    
\begin{remark}
The geometric interpretation of these bundles is as follows.  Let   $(x(t),u(t))$ be a  local holomorphic curve in $\cI$.  Then
\begin{blist}
\item $(\dot{x}(t),\dot{u}(t) )\in S$ for all $t$ if and only if  $\dot{x}(t)\in T_{x(t)} Z_{u(t)}$.  In other words, $x$ stays in $Z_u$ to first order.
\item $(\dot{x}(t),\dot{u}(t)) \in E$ for all $t$ if and only if $u(t)$ is constant $\equiv u$ and $x(t)\in Z_u$ for all $t$.  In other words, $E$ is the bundle tangent to the fibers of $p_\cU:\cI\to \cU$.
\item  $(\dot{x}(t),\dot{u}(t)) \in F$ for all $t$ if and only if $x(t)$ is constant  $\equiv x$ and $x\in Z_{u(t)}$ for all $t$.  In other words, $F$ is the bundle tangent to the fibers of the projection $p_D:\cI\to D$.
\end{blist}
\end{remark}

\begin{lemma} \label{lem:S=E+F}
The sub-bundles $E$ and $F$ are integrable, and $S = E\op F$.
\end{lemma}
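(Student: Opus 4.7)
The plan is to read off everything from the definitions and the fact, already noted in the remark preceding the lemma, that $E$ and $F$ are the holomorphic tangent bundles along the fibers of the two projections in the incidence diagram \eqref{eq:incidencediagram}.

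\medskip

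First I would dispose of integrability. The map $p_\cU : \cI \to \cU$ is a holomorphic submersion whose fiber over $u$ is precisely $Z_u$; tangent vectors of the form $(\dot x,0)\in\cT\cI$ with $\dot x \in \cT_x Z_u$ are exactly the vectors tangent to this fiber, so $E = \ker\, dp_\cU$. The vertical distribution of a submersion is integrable by Frobenius (its integral leaves are just the fibers), so $E$ is integrable. The same argument applied to $p_D:\cI \to D$, whose fiber over $x$ is $\{u\in\cU \mid x\in Z_u\}$, shows that $F = \ker\, dp_D$ is integrable. Hence the only thing to verify before concluding is that both projections actually are submersions in a neighborhood where we want $E$ and $F$ to be honest sub-bundles — but this is built into the definition of $\cU$ via the $G$-homogeneous deformation of $Z$, and $p_\cU$ is locally trivial with fiber $Z$.

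\medskip

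Next I would verify the decomposition $S = E\op F$ purely from the defining relations in Definition \ref{def:differentialsystems}. The inclusion $E\subset S$ is immediate: if $\dot u=0$, then trivially $\dot u(x)=0$, so $E$ sits inside $S$ by the second description of $S$. Similarly $F\subset S$ because $\dot x = 0\in \cT_x Z_u$ vacuously satisfies the first description of $S$. The intersection $E\cap F$ consists of pairs $(\dot x,\dot u)$ with both $\dot x=0$ and $\dot u=0$, hence is zero. Finally, given $(\dot x,\dot u)\in S_{(x,u)}$, write
\[
  (\dot x,\dot u) \ =\ (\dot x,0) \ +\ (0,\dot u).
\]
The condition $\dot x \in \cT_x Z_u$ says the first summand lies in $E_{(x,u)}$, and the condition $\dot u(x) = 0$ says the second summand lies in $F_{(x,u)}$. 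Since both conditions hold simultaneously for elements of $S$ (they are the two equivalent descriptions of the same space), this is a genuine internal direct sum decomposition, giving $S = E\op F$.

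\medskip

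I do not expect any real obstacle here: the entire content is the observation that $\cI$ carries two transverse holomorphic foliations — by the $Z_u$ and by the sets $\{u : x\in Z_u\}$ — whose combined leaves are parametrized precisely by the condition ``$x$ stays in $Z_u$ to first order.'' If anything, the only subtle point is ensuring $E$ and $F$ have constant rank on all of $\cI$, which follows from the $G$-equivariance of the construction of $\cU$.
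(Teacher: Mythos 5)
Your proof is correct, and for the $S = E\op F$ part it takes a different, more elementary route than the paper's. The paper derives $S = E\op F$ from the homogeneous-bundle identifications in \eqref{E:checkbdls}: there $\check S, \check E, \check F$ are $G_\bC\times_{K_\bC\cap P}(-)$ applied to the modules $\fk_-\op\fq_+$, $\fk_-$, and $\fq_+$ respectively, so the direct sum is literally visible at the Lie-algebra level. You instead work directly from Definition~\ref{def:differentialsystems} and the defining equation \eqref{eq:tangentincidence}: you check $E,F\subset S$, $E\cap F = 0$, and that any $(\dot x,\dot u)\in S$ splits as $(\dot x,0)+(0,\dot u)$ with the summands lying in $E$ and $F$ because, for elements of $S$, both conditions $\dot x\in T_xZ_u$ and $\dot u(x)=0$ hold (the paper itself notes these two descriptions of $S$ are equivalent). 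This is self-contained and transparent, at the cost of not reusing the Lie-theoretic setup the paper needs anyway for Lemma~\ref{lem:bracketgeneration}. For integrability, you and the paper use the same idea ($E$ and $F$ are the vertical distributions of the two projections $p_\cU$ and $p_D$, hence integrable), though the paper also records the Lie-algebraic bracket relations as an alternative check. One small thing you flag but leave slightly informal is why the projections are genuine submersions of constant rank on $\cI$: the cleanest justification is that they are restrictions to the open subset $\cI\subset\check\cI$ of the $G_\bC$-homogeneous fibrations in diagram \eqref{eq:dualincidencediagram}, which is essentially what your appeal to $G$-equivariance is getting at.
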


\begin{lemma}
\label{lem:bracketgeneration}
The sub-bundle $S$ is bracket--generating.
\end{lemma}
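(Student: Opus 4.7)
\emph{Proof plan.} The plan is to reduce bracket generation to a Lie-theoretic computation at one basepoint, exploiting the $G_\bC$-invariance of the setup. First, observe that $\cI$ is an open subset of the $G_\bC$-homogeneous space $\check\cI = G_\bC/(K_\bC \cap P)$, where the isomorphism sends $g(K_\bC \cap P)$ to $(gP, gK_\bC) \in \check D \times (G_\bC/K_\bC)$. The projections $\check\cI \to G_\bC/K_\bC$ and $\check\cI \to \check D$ are $G_\bC$-equivariant, so their vertical tangent bundles---which restrict to $E$ and $F$ on $\cI$---extend to $G_\bC$-invariant distributions on $\check\cI$. Bracket generation is thus a $G_\bC$-invariant condition, and it suffices to verify it at the basepoint $(x_0,u_0) = (eP,eK_\bC)$.

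At the basepoint, the direct sum decomposition $\fg_\bC = \fk_- \op (\fk_\bC \cap \fp) \op \fq_\bC$ identifies $T_{(x_0,u_0)}\check\cI \cong \fg_\bC/(\fk_\bC \cap \fp) \cong \fk_- \op \fq_\bC$, under which $E = \fk_-$, $F = \fq_+$, and $S = \fk_- \op \fq_+$; the complement $T\check\cI/S$ is then identified with $\fq_-$. A computation with the Maurer--Cartan equation on $G_\bC$ shows that the obstruction map $E \otimes F \to T\check\cI/S$ given by $(\xi,\eta) \mapsto [X_\xi,Y_\eta] \bmod S$ (for any local extensions $X_\xi,Y_\eta$ to sections of $E,F$) is the projection of the Lie bracket,
$$
  \fk_- \otimes \fq_+ \ \longrightarrow \ \fq_-, \qquad
  (\xi,\eta) \ \longmapsto \ [\xi,\eta]_{\fq_-} \,.
$$
More generally, iterated brackets of $S$-sections modulo $S$ correspond to iterated Lie brackets of $\fk_- \op \fq_+$ in $\fg_\bC$, projected onto $\fq_-$.

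The proof then reduces to the following Lie-algebraic claim: iterated brackets starting from $\fk_- \op \fq_+$ span all of $\fq_-$ inside $\fg_\bC$. Writing $\fk_- = \bigoplus_{\ell \ge 1} \fg_{-2\ell}$, $\fq_+ = \bigoplus_{m \ge 0} \fg_{2m+1}$, and $\fq_- = \bigoplus_{k \ge 0} \fg_{-2k-1}$ with respect to the $\ttT$-grading, the bracket inclusions $[\fg_{-2\ell}, \fg_{2m+1}] \subset \fg_{2m+1-2\ell}$ show that iterated brackets of elements of $\fq_+$ with $\fk_-$ land in the correct graded pieces of $\fq_-$. Since $D$ is non-classical we have $\fk_- \neq 0$, and I would establish surjectivity onto each $\fg_{-2k-1}$ using the simplicity of $\fg_\bC$ and a structural analysis of the $\ttT$-grading. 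This last Lie-algebraic surjectivity is the main obstacle in the argument, and will likely require either a uniform structural result on simple $\bZ$-graded Lie algebras or a case-by-case verification based on the classification of the possible $\ttT$-gradings.
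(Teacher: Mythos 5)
Your geometric setup is correct and closely matches the paper's: you realize $\cI$ as an open subset of $\check\cI = G_\bC/(K_\bC\cap P)$, identify $E$, $F$, $S$ with the homogeneous bundles corresponding to $\fk_-$, $\fq_+$, $\fk_-\op\fq_+$, and correctly reduce bracket generation to the Lie-algebraic claim that $\fq_-$ (equivalently $\fg_-$) lies in the subalgebra of $\fg_\bC$ generated by $\fk_-\op\fq_+$. But at exactly that point you stop: you record the graded bracket inclusions, note that $\fk_-\neq 0$, and then explicitly defer the surjectivity onto $\fq_-$ as ``the main obstacle,'' suggesting it will need either a structural theorem or a case-by-case check that you do not supply. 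That deferral is the entire content of the lemma; the geometric reduction is routine. So there is a genuine gap.

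The paper proves the Lie-algebraic claim uniformly, and the argument has several ideas you would have had to find. It first uses the fact (cf.\ \v{C}ap--Slov\'ak, Theorem 3.2.1) that $\fg_-$ is generated by $\fg_{-1}\op\fg_{-2}$; since $\fg_{-2}\subset\fk_-$, one only needs to produce $\fg_{-1}$. It then passes to an auxiliary grading element $\ttT'=\sum_{\s_i\in\Delta(\fq_\bC)}\ttT^i$ and reduces to showing that $\fg'_{-1}$ lies in the algebra generated by $\fg'_{-,\teven}\op\fg'_{+,\todd}$. The key step is to isolate the obstruction: set $\ff_{-1}$ to be the span of the root spaces in $\fg'_{-1}$ not contained in $[\fg'_1,\fg'_{-2}]$, form $\ff=\ff_1\op[\ff_1,\ff_{-1}]\op\ff_{-1}$, and prove in several steps (via the Killing form, $\fg'_0$-invariance, the Jacobi identity, and induction on the grading) that $\ff$ is an ideal of $\fg_\bC$. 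Simplicity then forces $\ff=0$ or $\ff=\fg_\bC$, and the latter is ruled out because it would make $G/K$ Hermitian symmetric with $p$ holomorphic, contradicting non-classicality. Note that non-classicality enters in a much stronger way than the bare statement $\fk_-\neq 0$ that you invoke: $\fk_-\neq 0$ holds for every flag domain with $V\subsetneq K$, classical or not, whereas the lemma is genuinely false in the classical case. Your reduction is sound, but you would need to supply something like this ideal-theoretic argument to complete the proof.
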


Recall that a sub-bundle $S$ of the tangent bundle $T M$ of a manifold $M$  is called \emph{bracket generating} if, for every $x\in M$,  the evaluation map $\cL_x\to T_x M$ is surjective, where $\cL_x$ is the Lie algebra of  germs of vector fields generated by the germs at $x$ of sections of $S$.

Lemmas \ref{lem:S=E+F} and \ref{lem:bracketgeneration} are proved using Lie algebra descriptions of the bundles above.  First, observe that each of the  spaces in \eqref{eq:incidencediagram} is an open set in a corresponding homogenous complex manifold.  In fact,  the diagram \eqref{eq:incidencediagram} embeds in the diagram
\begin{equation}
\label{eq:dualincidencediagram}
 \begin{array}{ccccc} &  & \check{\cI } = G_\bC/(K_\bC\cap P)&  &  \\ &  \swarrow &  & \searrow  &  \\  \check{D} = G_\bC/P &  &  &  & \check{\cU} = G_\bC/K_\bC\end{array}
\end{equation}
Moreover, the bundles $S, E, F$ of Definition (\ref{def:differentialsystems}) extend to homogeneous vector bundles $\check{S}, \check{E}, \check{F}$ over $\check{\cI}$.  Indeed, 
$$
  \cT\check\cI \ = \ 
  G_\bC \times_{K_\bC\cap P} \left(\fg_\bC / \fk_\bC\cap\fp\right)
  \ = \ G_\bC \times_{K_\bC\cap P} \left( \fg_\bC / \fk_0 \op \fk_+ \right) \,.
$$
Use the vector space identification $\fg_\bC/\fk_0 \op \fk_+ \simeq \fk_- \op \fq = \fk_- \op \fq_- \op \fq_+$ to regard $\fk_- \op \fq$ as a $K_\bC\cap P$--module.  Then we mildly abuse notation by writing 
\begin{equation} \label{E:checkTI}
  \cT\check\cI \ = \ 
  G_\bC \times_{K_\bC\cap P} \left( \fk_- \op \fq_- \op \fq_+ \right) \,.
\end{equation}
With respect to this identification, we have 
\begin{eqnarray}
  \nonumber
  \check S & = & G_\bC \times_{K_\bC\cap P} \left( \fk_- \op \fq_+ \right) \,,\\
  \label{E:checkbdls}
  \check E & = & G_\bC \times_{K_\bC\cap P} \fk_- \,,\\
  \nonumber
  \check F & = & G_\bC \times_{K_\bC\cap P} \fq_+ \,.
\end{eqnarray}
The bundles $\cT\cI$, $S$, $E$ and $F$ are the restrictions of $\cT\check\cI$, $\check S$, $\check E$ and $\check F$ to $\cI \subset \check \cI$.

\begin{proof}[Proof of Lemma \ref{lem:S=E+F}]
It is immediate from \eqref{E:checkbdls} that $\check S = \check E \op \check F$; thus $S = E \op F$.  Since $E, F$ are tangent to the fibers of the projections (\ref{eq:incidencediagram}) they are clearly integrable.  Alternatively, to see that $\check E$ and $\check F$ are involutive, it suffices to show that 
$$
  [\fk_- , \fk_-] \,\subset\, \fk_- \quad\hbox{mod}\quad \fk_\bC\cap\fp \,,
  \quad\hbox{and}\quad
  [\fq_+ , \fq_+] \,\subset\, \fq_+ \quad\hbox{mod}\quad \fk_\bC\cap\fp \,.
$$
These two relations are straightforward consequences of the identities in \S\ref{S:LT}.
\end{proof}

The proof of Lemma \ref{lem:bracketgeneration} is more involved.

\section{Proof of Lemma \ref{lem:bracketgeneration}}
\label{sec:proofbracketgeneration}

By \eqref{E:checkTI} and \eqref{E:checkbdls}, the bundle $\check S$ is bracket--generating (equivalently, $S$ is bracket generating) if and only if $\fq_-$ is contained in the algebra generated by $\fk_- \op \fq_+$.  Since, $\fg_- = \fk_- \op \fq_-$, this is equivalent to 
\begin{equation} \label{E:lemB1}
  \hbox{$\fg_-$ is contained in the algebra generated by $\fk_- \op \fq_+$.}
\end{equation}
It will be helpful to define an auxiliary grading element  
$$
  \ttT' \ = \ \sum_{\s_i \in\Delta(\fq_\bC)} \ttT^i \,.
$$
Let $\fg_\bC = \fg'_a \op \fg'_{a-1}\op\cdots\op\fg'_{1-a}\op\fg'_{-a}$ be the $\ttT'$--graded decomposition, cf. \eqref{E:espace}.  By \cite[Theorem 3.2.1(1)]{CS}, 
\begin{center}
  $\fg'_+$ (resp., $\fg_-$) is generated by $\fg'_1$ (resp., $\fg'_{-1}$).
\end{center}
A similar argument implies that 
\begin{center}
  $\fg_+$ (resp., $\fg_-$) is generated by $\fg_1\op\fg_2$ 
  (resp., $\fg_{-1}\op\fg_{-2}$).
\end{center}
Therefore, by $\fg_{-2} \subset \fk_-$ and \eqref{E:lemB1}, $\check S$ is bracket--generating if and only if
\begin{equation} \label{E:lemB2}
  \hbox{$\fg_{-1}$ is contained in the algebra generated by $\fk_- \op \fq_+$.}
\end{equation}
Observe that $\fg_{-1} \subset \fg'_{-1}$, $\fg_0\subset\fg'_0$ (both inclusions usually strict), and 
$$
  \fg'_{-,\teven} \,\subset\, \fg_{-,\teven} \,=\, \fk_- \quad\hbox{and}\quad
  \fg'_{+,\todd} \,=\, \fg_{+,\todd} \,=\, \fq_+ \,.
$$
Therefore, by \eqref{E:lemB2}, to see that $\check S$ is bracket--generating it suffices to show that 
\begin{equation} \label{E:lemB3}
  \hbox{$\fg'_{-1}$ is contained in the algebra generated by 
  $\fg'_{-,\teven} \op \fg'_{+,\todd}$.}
\end{equation}

\begin{proof}[Proof of \eqref{E:lemB3}]
Note that $[\fg'_{1},\fg'_{-2}]$ is a direct sum of (a subset of) root spaces $\fg^{-\a} \subset \fg'_{-1}$.  Let $\Gamma = \{-\a\in\Delta(\fg'_{-1}) \ | \ \fg^{-\a} \not\subset [\fg'_{1},\fg'_{-2}]\}$, and define 
$$
  \ff_{-1} \ = \ \bigoplus_{-\a\in \Gamma} \fg^{-\a} \tand
  \ff_{1} \ = \ \bigoplus_{-\a\in \Gamma} \fg^{\a} \,.
$$
Note that $\fg'_{-1} = [\fg'_1,\fg'_{-2}]$ (and \eqref{E:lemB3} holds) if and only if $\ff_{\pm1}=0$.  Set $\ff_0 = [\ff_1,\ff_{-1}]$.  Define
$$
  \ff \ = \ \ff_1\,\op\,\ff_0\,\op\,\ff_{-1}\,.
$$
We will show that $\ff$ is an ideal of $\fg_\bC$.  Since, by assumption, $\fg_\bC$ is simple, it follows that either $\ff=0$ or $\ff = \fg_\bC$.  If $\ff = \fg_\bC$, then $\fg_\pm = \ff_{\pm1}$ is abelian.  It follows easily that $\ff_{\pm} = \fg'_{\pm} = \fq_\pm$ and $\ff_0 = \fg'_0 = \fk_\bC$.   Therefore, $ G/K$ is Hermitian symmetric, with $(1,0)$-tangent space $\fq_-$.  Moreover, the derivative of the projection $p:G/V\to G/K$ is the projection $\fk_-\oplus \fq_- \to \fq_-$.  Thus (recalling (\ref{E:cpxstr})) $p$ is holomorphic,  contradicting our assumption that $D = G/V$ is non-classical.  Therefore, $\ff=0$, and \eqref{E:lemB3} holds, establishing Lemma \ref{lem:bracketgeneration}.

To prove that $\ff$ is an ideal, we proceed in these steps:

\medskip

\noindent\emph{Step 1}.  Note that $\ff_{-1}$ is the maximal subspace of $\fg'_{-1}$ with the property that $B(\ff_{-1} , [\fg'_{-1},\fg'_2]) = 0$.  Equivalently, 
$$
  0 \ = \ B( \ff_1 , [\fg'_1,\fg'_{-2}] ) \ = \ 
  B( \fg'_1 , [\ff_1,\fg'_{-2}] ) \ = \ B( \fg'_{-2} , [\ff_1 , \fg'_1 ]) \,.
$$
In particular, $\ff_1$ is the largest subspace of $\fg'_1$ with the property that $[\ff_1,\fg'_1] = 0$; equivalently, $[\ff_{-1},\fg'_2] = 0 = [\ff_1,\fg'_{-2}]$.

\medskip

\noindent\emph{Step 2: $\ff$ is a $\fg'_0$--module}.  First observe that $\ff_1$ is a $\fg'_0$--module.  To see this, let $G'_0 = \{ g \in G_\bC \ | \ \tAd_g(\fg'_\ell) = \fg'_\ell \,,\ \forall \ \ell \}$.  Then $G'_0$ is a closed Lie subgroup of $G_\bC$ with Lie algebra $\fg'_0$.  By Step 1, $\ff_1$ is the largest subspace of $\fg'_1$ with the property that $[\ff_1,\fg'_1]=0$.  So,
$$
  0 \ = \ \tAd_{G'_0}[ \ff_1 , \fg'_1] \ = \ 
  [ \tAd_{G'_0} \ff_1 , \tAd_{G'_0} \fg'_1] \ = \ [ \tAd_{G'_0}\ff_1 , \fg'_1] 
$$
implies $\tAd_{G'_0}\ff_1 = \ff_1$.  

An identical argument, with $\ff_{-1}$ in place of $\ff_1$, proves that $\ff_{-1}$ is a $\fg'_0$--module.  It follows from $\ff_0 = [\ff_1,\ff_{-1}]$ and $\tAd_{G'_0}[\ff_1,\ff_{-1}] = [\tAd_{G'_0}\ff_1 \,,\, \tAd_{G'_0}\ff_{-1}]$ that $\ff$ is a $\fg'_0$--module.

\medskip

\noindent\emph{Step 3: $[\fg'_{\pm1},\ff] \subset \ff$}.  To see that $[\fg'_1,\ff]\subset \ff$ we consider each component of $\ff=\ff_1\op\ff_0\op\ff_{-1}$:
\begin{blist}
\item  
By Step 1, $[\fg'_1,\ff_1] = 0$.  
\item 
To see that $[\fg'_1,\ff_{-1}] \subset \ff_0$, let $\ff_0^\perp$ be the Killing orthogonal complement to $\ff_0$ in $\fg'_0$.  Then
\begin{myeqn}{\ast}
  0 \ = \ B(\ff_0,\ff_0^\perp) \ = \ 
  B( [\ff_1 , \ff_{-1}] , \ff^\perp_0 ) \ = \ 
  B(  \ff_1 , [\ff_{-1}, \ff^\perp_0] ) \,. 
\end{myeqn}
By Step 2, $\ff_{-1}$ is a $\fg'_0$--module.  Since $\ff^\perp_0 \subset \fg'_0$, we must have $[\ff_{-1},\ff^\perp_0] \subset \ff_{-1}$.  Then $(\ast)$ yields $[\ff_{-1},\ff^\perp_0] = 0$.  It follows that 
$$
  B( [\fg'_1 , \ff_{-1}] \,,\, \ff^\perp_0 ) \ = \ 
  B( \fg'_1 \,,\, [\ff_{-1} , \ff^\perp_0] ) \ = \ 0 \,,
$$
yielding $[\fg'_1,\ff_{-1}] \subset \ff_0$.  
\item 
The Jacobi identity implies that $[\fg'_1 , \ff_0] = [\fg'_1,[\ff_1,\ff_{-1}]] \subset \ff$.  
\end{blist}
We conclude that $[\fg'_1 , \ff] \subset \ff$.  A similar argument yields $[\fg'_{-1},\ff] \subset \ff$, completing Step 3.

\medskip

\noindent\emph{Step 4: Induction}.    Suppose that $[\fg'_\ell,\ff] \subset \ff$ for some $\ell > 0$.  Since $\fg'_1$ generates $\fg'_+$, we have $\fg'_{\ell+1} = [\fg'_\ell,\fg'_1]$. Then the Jacobi identity yields $[\fg'_{\ell+1},\ff] \subset \ff$.  Thus $[\fg'_+ , \ff] \subset \ff$.  A similar argument yields $[\fg'_-,\ff] \subset \ff$.

\medskip

\noindent\emph{Fini}.  This completes the proof that $\ff$ is an ideal of $\fg_\bC$, and establishes \eqref{E:lemB3}, and thus Lemma \ref{lem:bracketgeneration}.
\end{proof}

\section{Proof of Theorem \ref{thm:Zconnectedness}}

Let $E, F, S$ be as in Definition \ref{def:differentialsystems}, and let $\cE,\cF,\cS$ be their respective  spaces of smooth sections.  We are interested in the collection $\cE\cup \cF$ of vector fields on $\cI$ and in the pseudogroup $\cG$ generated by their flows:

\begin{definition}
\label{def:pseudogroup} 
Let $\cG$ be the collection of all diffeomorphisms of $\cI$ of the form
\begin{displaymath}
\Phi(\vec{X},\vec{t}) = \exp(t_k X_k)\circ \dots \circ \exp(t_2 X_2 )\circ \exp(t_1 X_1)
\end{displaymath}
where $\vec{X} = (X_1,\dots X_k)$, each $X_i \in \cE\cup\cF$; $\vec{t} = (t_1,\dots t_k)$, each $t_i\in \bR$, and $k$ is a natural number.

Given $(x,u)\in\cI$, by the \emph{orbit of $(x,u)$ under $\cG$}, we mean the set $\cG (x,u) = \{\Phi(\vec{X},\vec{t})(x,u): \mbox{all }\vec{X}, \vec{t}\mbox{ as above}\}$.
\end{definition}

We have the following proposition:

\begin{proposition}
\label{prop:orbit}
For any $(x_0,u_0)\in\cI$, the orbit $\cG (x_0,u_0) = \cI$.
\end{proposition}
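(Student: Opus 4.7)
The plan is to derive Proposition \ref{prop:orbit} from the Chow-Rashevsky / Sussmann orbit theorem of geometric control theory (cf.~\cite{Jurdjevic}): on a connected manifold, the orbit of any point under the pseudogroup generated by the flows of a family of smooth vector fields whose values span a bracket-generating sub-bundle is the entire manifold. In our setting the distribution is (the real form of) $S = E \oplus F$, and the generating family is $\cE \cup \cF$.

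The main step is to pass from the \emph{complex} bracket generation of Lemma \ref{lem:bracketgeneration} to the \emph{real} bracket generation that the orbit theorem requires. Since $E, F$ are complex sub-bundles, $iX \in \cE \cup \cF$ whenever $X \in \cE \cup \cF$, so the real vector fields $X + \bar X$ and $iX - i\bar X$ together span at every point the real form $\{Z + \bar Z : Z \in S_{(x,u)}\}$ of $S$. For any two holomorphic local sections $X, Y$ of $S$ the bracket $[X, \bar Y]$ vanishes (holomorphic and anti-holomorphic vector fields commute), so
\begin{equation*}
  [X + \bar X, \ Y + \bar Y] \ = \ [X,Y] \ + \ \overline{[X,Y]}\,,
\end{equation*}
and by induction iterated real brackets of real parts of holomorphic sections of $S$ are exactly the real parts of iterated holomorphic brackets. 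Lemma \ref{lem:bracketgeneration} ensures the latter span $T^{1,0}\cI$ at every point, and the real-linear isomorphism $Z \mapsto Z + \bar Z$ between $T^{1,0}_p \cI$ and $T_p \cI$ converts this into real bracket generation at $p$. Local holomorphic sections of $E$ and $F$ exist (these are holomorphic sub-bundles) and smooth cutoffs promote them to global elements of $\cE \cup \cF$ agreeing with the holomorphic ones near a chosen point, so the pseudogroup $\cG$ has access to the vector fields required.

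It remains to know that $\cI$ is connected. The projection $p_\cU : \cI \to \cU$ has fiber $Z_u \simeq K/V$ over $u$, which is connected since $K$ is a connected compact group; and $\cU = U/K_\bC$ is connected because $U$ is by construction a connected open neighborhood of the identity in $G_\bC$. Sussmann's theorem then says that $\cG(x_0, u_0)$ is an immersed submanifold of $\cI$ whose tangent space at each point is the full real tangent space, hence that the orbit is open. The orbits partition $\cI$ and are all open, so connectedness of $\cI$ forces a single orbit, namely $\cG(x_0, u_0) = \cI$. The only subtle step is the real-versus-complex bracket-generation translation in the middle paragraph; the rest is a direct application of standard results.
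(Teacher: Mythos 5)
Your proof is correct and follows the same route as the paper: reduce to bracket generation of $S$ via Lemmas \ref{lem:S=E+F} and \ref{lem:bracketgeneration}, then invoke the Stefan--Sussmann orbit theorem. You additionally make explicit two details the paper leaves implicit — the passage from holomorphic bracket generation (which is what the Lie-algebraic proof of Lemma \ref{lem:bracketgeneration} actually establishes) to the real bracket generation the orbit theorem requires, and the connectedness of $\cI$ — and both additions genuinely tighten the argument.
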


\begin{proof}
Let $\cL(\cE\cup\cF)$ denote the Lie subalgebra of the algebra $C^\infty (T\cI)$ of smooth  vector fields on $\cI$ generated by $\cE\cup\cF$.  For each $(x,u)\in \cI$, let $\cL_{(x,u)}(\cE\cup\cF) \subset T_{(x,u)}\cI$ denote the set of evaluations at $(x,u)$ of all the vector fields in $\cL(\cE\cup\cF)$.  A standard theorem in Control Theory, going back to  \cite{Stefan, Sussmann}, asserts: 
\begin{quote}
  If $\cL(\cE\cup\cF) = \cT_{(x,u)}\cI$, for all $(x,u)\in \cI$, 
  then $\cG$ has only one orbit; namely $\cI$.   
\end{quote}
(Cf. Theorem 3 of \cite[Section 2.3]{Jurdjevic}.)  So to prove the proposition, it suffices to show
\begin{equation} \label{E:prop}
\hbox{For all $(x,u)\in\cI$, we have $\cL_{(x,u)}(\cE\cap\cF) = T_{(x,u)}\cI$.}
\end{equation}

By Lemma \ref{lem:S=E+F}, $E\oplus F = S$.  So, the Lie algebra generated by $\cE\cup\cF$ is the same as the Lie algebra $\cL(\cS)$ generated by $\cS$.  By Lemma \ref{lem:bracketgeneration}, $S$ is bracket generating.  Thus, for all $(x,u)\in \cI$, $\cL_{(x,u)}(\cS) = T_{(x,u)}\cI$; this establishes \eqref{E:prop}. 
\end{proof}

Now we can complete the proof of Theorem \ref{thm:Zconnectedness}.  Let $(x_0,u_0)\in\cI$,  let $\Phi(\vec{X},\vec{t})\in\cG$ be as in Definition \ref{def:pseudogroup}, and, for each $j = 1,\dots, k$, let
\begin{equation}
\label{eq:staircase}
(x_j,u_j) = \exp(t_j X_j)\circ\dots\circ\exp(t_1 X_1)(x_0,u_0)
\end{equation}
so that 
\begin{equation}
\label{eq:onestep}
(x_{j+1},u_{j+1}) = \exp(t_{j+1} X_{j+1})(x_j,u_j).
\end{equation}
Note that each step (\ref{eq:onestep}) is obtained by flowing along the vector field $X_{j+1}$:
let $\gamma(t)$ be the curve $\exp(tX_{j+1})(x_j,u_j)$.  Then $\gamma(0) = (x_j,u_j)$ and $\gamma(t_{j+1}) = (x_{j+1},u_{j+1})$.   We will describe each step (\ref{eq:onestep}) by saying \emph{$(x_j,u_j)$ flows along $X_{j+1}$ to $(x_{j+1},u_{j+1})$}.

Since $X_{j+1}\in\cE\cup\cF$, in each step (\ref{eq:onestep}) either $u$ is constant or $x$ is constant.  That is, either:

\begin{blist}
\item $u_j = u_{j+1}$ and $x$ flows from $x_j$ to $x_{j+1}$ inside $Z_{u_j}$, if $X_{j+1}\in\cE$.
\item $x_j = x_{j+1}$ and $Z_u$ flows from $Z_{u_j}$ to $Z_{u_{j+1}}$ keeping $x_j$ fixed, if $X_{j+1}\in\cF$.
\end{blist}
Therefore the points $x_0$ and $x_k$ are joined by a chain of subvarieties $Z_{u_0}\dots Z_{u_k}$ where each subvariety meets the next.   (Maybe one is identical to the next, in which case we may want to eliminate repetitions).  Since $x_0, x_k\in D$ are arbitrary, Theorem \ref{thm:Zconnectedness}  is proved.

\section{Proof of Theorem \ref{thm:nonalgebraic}}

Let $D$ be a non-classical domain, let $\Gamma\subset G$ be an infinite, finitely generated discrete subgroup.    Then $X = \Gamma\backslash D$ is a normal analytic space, and we want to prove that $X$ does not have an algebraic structure.

\subsection{Reduction to torsion-free $\Gamma$}
\label{subsec:reduction}

Since $\Gamma$ is a finitely generated linear group, it has a torsion-free subgroup $\Gamma'$  of finite index 
(Proposition 2.3 of \cite{Borel}).    Then $\Gamma'$ acts freely on $D = G/V$ and the quotient space $X' = \Gamma'\backslash D$ is a complex manifold with fundamental group $\Gamma'$.  The natural projection $\pi:X'\to X$ is a finite analytic map, that is, a proper analytic map with finite fibers.

\begin{lemma}
\label{lem:codimension}
Let $\gamma\in G$ be an element of finite order, $\gamma\ne e$.  Then the fixed point set of $\gamma$ is a complex submanifold of $D$ of  complex codimension at least two.
\end{lemma}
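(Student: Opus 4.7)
The plan is to reduce to $\gamma\in T\subset V$ by conjugation, compute the codimension of the fixed locus at $eV$ via root-space eigenvalues of $\Ad(\gamma)$ on $\fg_-$, and then rule out complex codimensions $0$ and $1$. Since $\gamma$ has finite order it is semisimple and contained in a compact Cartan subgroup of $G$; as any two such are $G$-conjugate, we may take $\gamma\in T\subset V$, so $eV$ is a fixed point (any other fixed point $gV$ reduces to this one after replacing $\gamma$ by $g^{-1}\gamma g$). The fixed locus of a finite-order holomorphic automorphism is a closed complex submanifold by Cartan's linearization; identifying $T^{1,0}_{eV}D\cong\fg_-$, its tangent space at $eV$ is $\bigoplus\{\fg^\a:\a\in\Delta(\fg_-),\ \chi_\gamma(\a)=1\}$, where $\chi_\gamma(\a):=e^{\a(\log\gamma)}$. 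Hence the complex codimension at $eV$ equals $c:=\#\{\a\in\Delta(\fg_-):\chi_\gamma(\a)\ne 1\}$, and I must prove $c\ge 2$.

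To rule out $c=0$: complex conjugation on $\fg_\bC$ against the real form $\fg$ sends $\fg^\a\leftrightarrow\fg^{-\a}$ and satisfies $\chi_\gamma(-\a)=\overline{\chi_\gamma(\a)}$; combined with $\Delta(\fg_+)=-\Delta(\fg_-)$, vanishing of $c$ forces $\chi_\gamma\equiv 1$ on $\Delta(\fg/\fv_\bC)$, so $\Ad(\gamma)$ acts trivially on $\fg/\fv$. Then $\gamma$ fixes an open neighborhood of $eV$ in $D$ by the linearization, hence all of $D$ by analytic continuation on the connected complex manifold $D$; effectivity of $G$ on $D$ (from $G$ being adjoint) yields $\gamma=e$, contradicting $\gamma\ne e$.

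The main obstacle is ruling out $c=1$. Assume for contradiction a unique $\a_0\in\Delta(\fg_-)$ with $\zeta:=\chi_\gamma(\a_0)\ne 1$, so by conjugation $-\a_0$ is the unique non-fixed root in $\Delta(\fg_+)$. Using that $\fg_-$ is generated as a Lie algebra by $\fg_{-1}\oplus\fg_{-2}$ (cf.\ the proof of Lemma \ref{lem:bracketgeneration}), any root $\a\in\Delta(\fg_-)\setminus(\Delta(\fg_{-1})\cup\Delta(\fg_{-2}))$ decomposes as $\a=\beta_1+\beta_2$ with $\beta_i\in\Delta(\fg_-)$, giving $\chi_\gamma(\a)=\chi_\gamma(\beta_1)\chi_\gamma(\beta_2)=1$; thus $\a_0\in\Delta(\fg_{-1})\cup\Delta(\fg_{-2})$. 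Moreover, for every $\beta\in\Delta(\fg_-)\setminus\{\a_0\}$ with $\a_0+\beta\in\Delta$, the root $\a_0+\beta$ lies in $\Delta(\fg_-)\setminus\{\a_0\}$, so simultaneously $\chi_\gamma(\a_0+\beta)=1$ (by the $c=1$ hypothesis) and $\chi_\gamma(\a_0+\beta)=\chi_\gamma(\a_0)\chi_\gamma(\beta)=\zeta$, a contradiction. Hence $[\fg^{\a_0},\fg_-]=0$, i.e.\ $\fg^{\a_0}$ lies in the center $Z(\fg_-)$, and symmetrically $\fg^{-\a_0}\subset Z(\fg_+)$. The hard part will be to close out the contradiction from these centrality conditions by combining simplicity of $\fg$ with non-classicality of $D$ (which forces the $\ttT$-grading to have depth $\ge 2$ and the bracket pairing $\fg_{-1}\times\fg_{-1}\to\fg_{-2}$ to be suitably non-degenerate with respect to the polarized Hodge structure): the expected conclusion is that the isolated pair $\{\pm\a_0\}$ would carve out an $\mathfrak{sl}_2$-ideal in $\fg_\bC$, violating simplicity.
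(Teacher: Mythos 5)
Your Lie-algebraic approach is genuinely different from the paper's, which never computes root-space eigenvalues: instead it observes that $F(\gamma,D)$ fibers over $F(\gamma,G/K)$ with fiber $F(\gamma,K/V)$, and then splits into two cases according to whether $\gamma$ fixes $K/V$ pointwise. In the first case it adds codimensions from base and fiber; in the second it shows $\gamma$ commutes with a nonabelian product $L$ of simple factors of $K$, and uses that any nontrivial real representation of a compact connected simple group has dimension $\ge 3$ to force real codimension $\ge 3$ in $G/K$. That argument leans essentially on the irreducibility of the isotropy representation of $K$ on $T_{eK}(G/K)$ and on the structure theory of the compact group $K$, rather than on the $\ttT$-grading.

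Your reduction to $\gamma\in T$, the Cartan-linearization argument, the identification of the codimension with $c=\#\{\a\in\Delta(\fg_-):\chi_\gamma(\a)\ne 1\}$, and the elimination of $c=0$ via effectivity of the adjoint group are all sound. The elimination of $c=1$ is where the proposal is incomplete, and you say so yourself (``the hard part will be to close out the contradiction''). The steps you do carry out are fine --- $\a_0\in\Delta(\fg_{-1})\cup\Delta(\fg_{-2})$, and $\fg^{\a_0}\subset Z(\fg_-)$, $\fg^{-\a_0}\subset Z(\fg_+)$ --- but they do not by themselves put $\fg^{\a_0}\op\bC H_{\a_0}\op\fg^{-\a_0}$ anywhere near being an ideal. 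The $c=1$ hypothesis only constrains $\chi_\gamma$ on $\Delta(\fg_-)\cup\Delta(\fg_+)=\Delta\setminus\Delta(\fv_\bC)$; it says nothing about $\chi_\gamma$ on $\Delta(\fg_0)$. Consequently, for $\beta\in\Delta(\fg_+)$ with $\deg\beta=-\deg\a_0$, nothing you have shown prevents $\a_0+\beta$ from being a root of $\fg_0$, i.e.\ $[\fg^{\a_0},\fg_+]$ can land in $\fv_\bC$; likewise $[\fg_-,\fg^{-\a_0}]$ and, worse, $[\fh,\fg^{\a_0}]=\a_0(\fh)\fg^{\a_0}$ forces $\beta(H_{\a_0})=0$ for all $\beta\ne\pm\a_0$ if your candidate is to be an ideal, and you have not produced that orthogonality. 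Moreover, $Z(\fg_-)$ is always nonzero (it contains $\fg_{-k}$), so centrality of $\fg^{\a_0}$ alone cannot be a contradiction; you would still need to use that $\a_0$ sits at depth $1$ or $2$ together with non-classicality in some nontrivial way. As written, the proof stops exactly where the real work begins, so this is a genuine gap rather than a finished alternative route.
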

\begin{proof}

If $\gamma$ has a fixed point in $G/V$ then it is conjugate in $G$ to an element of $V$, so we may assume $\gamma\in V$.  If a group $H$ acts on a space $A$ and $h\in H$, let  $F(h, A)$ denote the fixed point set of $h$ in $A$. Since $\gamma$ is a holomorphic map of $D$ and is an isometry of a Hermitian metric (\S 9 of \cite{GS}), $F(\gamma,D)$ is a complex submanifold of $D$.  We claim  that  $F(\gamma,D)$ is fibered as follows by the restriction of the projection $p:D = G/V\to G/K$: 
\begin{center} \setlength{\unitlength}{5pt}
\begin{picture}(25,12)
\put(0,10){$F(\gamma, K/V)$} \put(12,10.5){\vector(1,0){5}}
\put(18,10){$F(\gamma,G/V)$} \put(23,9){\vector(0,-1){5.5}} \put(24,6.5){$p$}
\put(18,0){$F(\gamma,G/K)$.}
\end{picture}
\end{center}
This is easily checked by writing explicitly $p|F(\gamma,D)$:
$$
 \{gV\in G/V: g^{-1}\gamma g \in V\} \to  \{gK\in G/K: g^{-1}\gamma g \in K\}
$$
which takes $gV$ to $gK$.  This map is surjective because the fiber over $gK\in  F(\gamma,G/K)$ is 
$$
\{gkV:k\in K\hbox{ and } k^{-1}g^{-1}\gamma g k\in V\} = gF(g^{-1}\gamma g,K/V),
$$ 
which is not empty, because $K$ is compact and $T\subset V$, so $g^{-1} \gamma g$ is conjugate to an element of $T$.  Thus $p|_{F(\gamma,G/V)}$ is a fibration of  $F(\gamma,D)$  over $F(\gamma,G/K)$ with fibers biholomorphic to $F(\gamma,K/V)$.  

Recall we are assuming that $G$ is a simple  adjoint group.  Since it has no non-trivial normal subgroups, it must act effectively on $D = G/V$ and on the symmetric space $G/K$. But $K$ must have non-trivial center  since it contains the Cartan involution. Recall that $K/V$ is always a positive-dimensional complex manifold, since  $D$ is non-classical.  In particular, $K$ is non-abelian.

 There are two cases to consider:

\noindent\emph{Case 1: $\gamma$  does not fix $K/V$ pointwise.}   Then the real codimension of $F(\gamma, K/V)$ in $K/V$ is at least two, since it is a proper complex submanifold of the positive-dimensional complex manifold $K/V$.  The real codimension of $F(\gamma,G/K)$ is at least one, since it is a proper submanifold of $G/K$. Therefore the real codimension of $F(\gamma,D)$ is at least three, so its complex codimension is at least two.

\noindent\emph{Case 2: $\gamma$ fixes $K/V$ pointwise.}  Then 
$\gamma\in N = \cap_{k\in K} k V k^{-1}$, which is a non-trivial proper normal subgroup of $K$ containing the center $Z(K) =\cap_{k\in K} kTk^{-1}$ of $K$, that is  $Z(K)\subset N\subsetneq V \subsetneq K$.  

Recall that $K$ is connected and is either a simple group, or finitely covered by a product of compact simple groups $K_1,K_2,\dots$ or, exactly when $G/K$ is Hermitian symmetric, it is finitely covered by a product $S^1\times K'$ where $K'$ is a product of simple groups and the image of $S^1$ is the center of $K$.

We claim that $Z_K(\gamma)$, the centralizer of $\gamma$ in $K$, always contains a  non-abelian subgroup $L$ of $K$ which is a product of simple factos of $K$.    To prove this, observe that either $N\subset Z(K)$, in which case $Z_K(\gamma) = K$, or $N^0$ (the identity component of $N$)  consists of a proper product of factors,  omitting at least one simple factor of $K$.  (In the Hermitian symmetric case we use the fact that $N$ contains $Z(K)$ to see that the $S^1$-factor must be in $N^0$).  Let $L$ be the product of all the omitted factors.  Then $L$ is non-abelian and  from $K = LN^0$, $L\cap N^0$ central, and $N=\cup l_i N^0$ (union of finitely many cosets $l_iN^0$, where $l_i\in L$), it is easy to see that    $N = C N^0$ for some finite central subgroup $C$.    Then $N$, hence $\gamma$, commutes with $L$, that is, $L\subset Z_K(\gamma)$.

Since $Z_K(\gamma)$ leaves $F(\gamma,G/K)$ invariant, so does $L$.
Then the tangent space $T_{eK} F(\gamma,G/K)$ is a proper $L$-invariant subspace of $T_{eK}G/K$.  It has a non-zero $L$-invariant complement.   Since $G/K$ is irreducible, the action of $K$ on the tangent space $T_{eK}G/K$  is irreducible.  This implies that the factor $L$ of $K$ cannot act trivially on any non-zero subspace of $T_{eK}G/K$.  (If it did, the subspace of $L$-invariant vectors in  $T_{eK}G/K$ would be a non-zero, proper subspace stable under $K$, contradicting irreducibility.)  

Therefore the $L$-invariant complement of $T_{eK}F(\gamma,G/K)$ is a non-trivial representation of $L$.  But a non-trivial representation of a compact, connected  simple group on a real vector space must have dimension at least three, so the real codimension of $F(\gamma,G/K)$ in $G/K$ is at least three.  Therefore the real codimension of $F(\gamma,G/V)$ must be at least three, so its complex codimension at least two.
\end{proof}

Applying this lemma to the elements of finite order in $\Gamma$, we see  that there are analytic subsets $Y, Y'$ of codimension at least two,  $Y'\subset X'$  and  $Y\subset X$, so that  $\pi:X'\setminus Y'\to X\setminus Y$ is unramified.  Moreover $Y$ is the singular set of the normal analytic space  $X$.

  Suppose $X$ has an algebraic structure, meaning that  there is an algebraic variety $W$ so that $X = W^\tanalytic$ is the analytic space associated to $W$.  Then $Y = \Sigma^\tanalytic$, where $\Sigma\subset W$ is the singular set of $W$.  Let $U = W\setminus \Sigma$ be the set of regular points of $W$.  Since $\pi|_{X'\setminus Y'}$ is finite \'etale, by
   the \lq\lq Riemann Existence Theorem"  (Theorem XII 5.1 of \cite{SGA}),  $X'\setminus Y'$ would also have an algebraic structure. In other words, there would be an algebraic variety $U'$ so that $U'^\tanalytic = X'\setminus Y'$ and a finite \'etale map $\rho:U'\to U$ with $\rho^\tanalytic = \pi$.
   
  Let $j:U\to W$ be the inclusion. \emph{Since  $\Sigma$ has codimension at least two},  $j_*\rho_*\cO_{U'}$   is a coherent sheaf of algebras over $\cO_{W}$ whose associated analytic sheaf on $X = W^\tanalytic$  agrees with $\pi_*\cO_{X'}$ on $X\setminus Y$, hence  on all of $X$.  Then $W' = \mathrm{ Spec}_W (j_*\rho_*\cO_{U'})$ is a scheme of finite type over $\bC$ with $W'^\tanalytic = X'$.  
  
   In other words, if $X$ were algebraic,  $X'$ would also be algebraic.  So we may  assume that our discrete group $\Gamma$ is torsion--free.

\subsection{The Shafarevich Map}

Assume from now on, that  $X = \Gamma \backslash D$  where $\Gamma$ has no torsion, thus $X$ is a complex manifold with fundamental group $\Gamma$.  Suppose that  $X$ were an algebraic variety.    We will derive a contradiction by using Koll\'ar's Shafarevich map.  We begin by recalling the facts that we need from \cite{Kollar}.

First, according to Definition 3.5  of \cite{Kollar},  (specialized to the case $H = \{id\}$), if $X$ is a normal algebraic variety, a \emph{Shafarevich variety} $Sh(X)$ and \emph{Shafarevich map} $sh_X$ are a normal variety $Sh(X)$ and a rational map $sh_X: X\dashrightarrow Sh(X)$ with the properties: that $sh_X$ has connected fibers, and that there is a countable collection  of closed subvarieties of $D_i \subset X$,  $D_i\ne X$, so that \emph{for any closed, irreducible subvariety $Z\subset X$  not contained in $\cup D_i$} the following holds:
\begin{equation}
\label{eq:defineshafarevich}
sh_X(Z) = \mbox{ point \  if and only if } \tim\{\pi_1(\bar Z)\to \pi_1(X)\}\ \mbox{ is finite,}
\end{equation}
where $\bar Z$ denotes the normalization of $Z$. 

\begin{definition}[Definition 2.1 of \cite{Kollar}]
\label{def:normalcycle}
A \emph{normal cycle} in $X$  means an irreducible normal variety $W$ together with a finite morphism $w:W\to X$ that is birational to its image.
\end{definition}
 
\begin{theorem}[Theorem 3.6 of \cite{Kollar}]
\label{thm:kollar}
Let $X$  be a normal algebraic variety.  Then a Shafarevich variety and map exist, unique up to birational equivalence.  Moreover, for every choice of $Sh(X)$ within its birational equivalence class, there are Zariski open subsets $X_0\subset X$ and $Y^0\subset Sh(X)$ so that
\begin{a_list}
\item  
$sh_X:X^0\to Y^0$   is everywhere defined,
\item 
Every fiber of $ sh_X | X^0$ is closed in  $X$,
\item
$sh_X|X^0$  is a topologically locally trivial fibration.
\item 
Let $y\in Y^0$ be very general, let $X^0_y$ be the fiber of $sh_X|X^0$ over $y$, and let $w:W\to X$ be a normal cycle with $\tim\{\pi_1(W)\to \pi_1(X)\}$ finite.  If $\tim\{w\}\cap X^0_y\ne\emptyset$, then $\tim\{w\}\subset X^0_y$.
\end{a_list}
\end{theorem}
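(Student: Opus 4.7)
The plan is to follow Kollár's original strategy and realize $Sh(X)$ as the quotient of $X$ by the equivalence relation $\sim$ whose classes are the chain-closures of normal cycles $w:W\to X$ with finite image in $\pi_1(X)$. The Shafarevich map $sh_X$ will be the canonical map to this quotient, and the Zariski open sets $X^0, Y^0$ will arise as the largest loci on which $\sim$ is cut out by a proper algebraic morphism.

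First I would parameterize normal cycles via the Chow variety $\mathrm{Chow}(X)$, which decomposes into countably many irreducible components $C_1, C_2, \dots$, each carrying a universal family $U_\alpha \to C_\alpha$ and a tautological map $U_\alpha \to X$. For each component I would test whether the image of $\pi_1(\overline{W}_{\mathrm{gen}}) \to \pi_1(X)$ on a generic normalized fiber is finite, and retain only those $C_\alpha$ for which it is; this yields a countable family $\mathcal{F}$ of subvarieties. Two points of $X$ are then declared equivalent if they can be joined by a finite chain of members of $\mathcal{F}$ with consecutive members meeting, and $sh_X$ is the quotient map for this relation.

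The main technical obstacle is to make this quotient algebraic. Concretely, one needs a \emph{boundedness} statement: through a very general point of $X$, only finitely many components of $\mathcal{F}$ contribute nontrivially to the equivalence class, so that the class is a genuine closed subvariety of $X$. The countably many loci where boundedness fails are precisely the exceptional subvarieties $D_i \subsetneq X$ appearing in \eqref{eq:defineshafarevich}; discarding their union defines $X^0$, and its image under the resulting morphism defines $Y^0$. Establishing this boundedness is the heart of the proof and relies on rigidity properties of algebraic cycles with uniformly controlled topology, together with the fact that $\pi_1(X)$ is residually countable; this is the step I would expect to be most delicate.

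Once $sh_X|_{X^0} \colon X^0 \to Y^0$ is produced as a proper algebraic morphism with connected fibers, the remaining properties are relatively formal. Conclusions (a) and (b) are immediate from the construction; (c) follows by further shrinking $X^0$ using generic smoothness of $sh_X|_{X^0}$ and applying Ehresmann's theorem along a stratification of $Y^0$; and (d) is direct from the definition of $\mathcal{F}$, since any normal cycle $w:W\to X$ with finite image in $\pi_1(X)$ must (up to closure) be a member of $\mathcal{F}$ and is therefore contracted by $sh_X$, so if $\mathrm{im}(w)$ meets a very general fiber $X^0_y$ then by connectedness of $\mathrm{im}(w)$ the whole image lies in $X^0_y$.
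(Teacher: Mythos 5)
This statement is Theorem~3.6 of Koll\'ar's book \cite{Kollar}, which the paper cites as a known result and does not itself prove; there is therefore no in-paper proof for your sketch to be compared against. Your outline---constructing $Sh(X)$ as the quotient by the chain-equivalence relation generated by normal cycles with finite $\pi_1$-image, parameterizing such cycles through Chow components, and isolating boundedness and algebraicity of the quotient as the central technical difficulty---is broadly in the spirit of Koll\'ar's construction, though some of the mechanisms you invoke (for instance ``residual countability'' of $\pi_1(X)$, or generic smoothness plus Ehresmann for part (c), which is problematic in the singular/quasi-projective setting where Koll\'ar instead uses a stratification argument) are not the ones Koll\'ar actually uses, and the algebraicity step is considerably more delicate than the sketch suggests. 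More to the point, as the paper's Remark~\ref{rm:fibers} explicitly notes, part (d) is \emph{not} in the statement of Koll\'ar's Theorem~3.6 as written; it is extracted from his Corollary~3.4 and from the body of his proof. So if you wanted to establish the theorem in the exact form quoted here, part (d) would have to be justified by tracing through that additional material in \cite{Kollar}, rather than by treating it as a formal consequence of the definition of the family of cycles, and in any case the intended move in this paper is simply to cite \cite{Kollar}, not to reprove the theorem.
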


\begin{remark}
\label{rm:fibers}
The fourth statement does not appear explicitly in the statement of Theorem 3.6 of \cite{Kollar}, but  it appears in  Corollary 3.4 and in the proof of Theorem 3.6, and is used in the proof of Theorem 4.13.   It is a more detailed version of (\ref{eq:defineshafarevich}).  (We have also omitted statements regarding proper varieties, since our interest is in the non-proper situation.)
\end{remark}

\begin{proof}[Proof of Theorem \ref{thm:nonalgebraic}]
We now suppose that the complex manifold $X = \Gamma\backslash D$ has a compatible algebraic  structure and derive a contradiction by using Theorem \ref{thm:kollar}.   We first prove that $sh_X|X^0$ is constant.   

Let $\pi:D\to X$ be the projection.  For any  of our compact subvarieties  $Z_u\subset D$, the map $\pi|Z_u:Z_u\to X$ is an immersion, therefore it is finite and birational to its image, in other words, it is a normal cycle in the sense of Definition \ref{def:normalcycle}.   

Let $y_1,y_2\in Y^0$ be very general points.  Choose $x_1,x_2\in D$ so that $\pi(x_1)\in X^0_{y_1}$ and $\pi(x_2)\in X^0_{y_2}$.  By Theorem \ref{thm:Zconnectedness} there is a chain $u_1,\dots,u_k\in \cU$ so that, letting $Z_i $ denote $Z_{u_i}$,  $x_1\in Z_1, x_2\in Z_k$ and $Z_i\cap Z_{i+1} \ne \emptyset$.  Choose a point $z_i\in Z_i\cap Z_{i+1}$.  Let  $w_i$ denote the normal cycle  $w_i = \pi|Z_i:Z_i\to X$.  Recall that $Z_i = K/V$ is simply connected.  By (d) of Theorem \ref{thm:kollar},  $\tim\{w_1\}\subset X^0_{y_1}$, hence $w_1(z_1)\in X^0_{y_1}$, hence $\tim\{w_2\}\subset X^0_{y_1}$.  Continuing this way, we get $w_k(x_2)\in X^0_{y_1}$.  But by definition $w_k(x_2)\in X^0_{y_2}$.  Hence $y_1 = y_2$ for any two very general points in $Y^0$.  Thus $sh_X|X^0:X^0\to Y^0$ is the constant map.

Now, by Theorem \ref{thm:kollar}(b), the single fiber $X^0$ of $sh_X$ is closed in $X$, hence $X^0 = X$, the rational map $sh_X$ is everywhere defined on $X$ and $sh_X(X)$ is a point.  Therefore
 (\ref{eq:defineshafarevich}),  gives that $\pi_1(X)$ is finite.  But $\pi_1(X) \cong \Gamma$ is infinite, thus contradicting the algebraicity of $X$. 
\end{proof}

\end{document}